\newtheorem{theorem}{Theorem}[section]
\newtheorem{definition}{Definition}[section]
\begin{document}

\title{Variational Collision Integrators for Nonholonomic Lagrangian Systems}

\author{\'Alvaro Rodr\'iguez Abella}
\email{rodriguezabella@g.ucla.edu}
\address{Electrical and Computer Engineering Department, University of California, Los Angeles, CA 90095 USA.}
\author{Leonardo Colombo}
\email{leonardo.colombo@car.upm-csic.es}
\address{Centre for Automation and Robotics (CSIC-UPM), Ctra. M300 Campo Real, Km 0,200, Arganda del Rey - 28500 Madrid, Spain.}  \thanks{LC acknowledges financial support from Grant PID2022-137909NB-C21 funded by MCIN/AEI/ 10.13039/501100011033.}%

\begin{abstract}
A discrete theory for implicit nonholonomic Lagrangian systems undergoing elastic collisions is developed. It is based on the discrete Lagrange--d'Alembert--Pontryagin variational principle and the dynamical equations thus obtained are the discrete nonholonomic implicit Euler--Lagrange equations together with the discrete conditions for the elastic impact. To illustrate the theory, variational integrators with collisions are built for several examples, including a bouncing ellipse and a nonholonomic spherical pendulum evolving inside a cylinder.
\end{abstract}

\maketitle

\section{Introduction}

The Lagrange--Dirac formulation of mechanics introduced in \cite{YoMa2006a,YoMa2006b} constitutes a unifying framework for implicit systems with nonholonomic constraints. The variational approach is based on the Lagrange--d'Alembert--Pontryagin principle, an extension of the Hamilton principle to account for degenerate (implicit) systems and nonholonomic constrains. The versatility of this framework has made it suitable to perform reduction by symmetries \cite{GaYo2015,YoMa2007} and interconnection \cite{JaYo2014,Ro2023}, as well as to extend it to the infinite-dimensional setting \cite{GaRoYo2025,RoGaYo2023}. In addition, in \cite{RoCo2023,RoCo2024} it was extended to systems with elastic collisions by following the approach in \cite{FeMaOrWe2003} for unconstrained systems. By regarding external forces as control inputs, implicit port-controlled Lagrangian systems were introduced in \cite{YoMa2012}. From that perspective, the space of inputs is thus a subbundle of the cotangent bundle of the configuration space of the system.

The discretization of continuous theories is a ubiquitous tool in geometric mechanics to build variational integrators \cite{MaWe2001}. The discretization of the Lagrange--Dirac mechanics was carried out in \cite{LeOh2010,LeOh2011} (see also \cite{CaFeToZu2023,PeYo2024} for alternative approaches), leading to discrete interconnection \cite{PaLe2016} and reduction \cite{RoLe2023}.

The aim of this work is to build a discrete analogous of the Lagrangian theory for implicit nonholonomic systems with elastic collisions introduced in \cite{RoCo2023,RoCo2024}, thus obtaining variational integrators for such systems. In order to solve the collision, we follow the approach in \cite{FeMaOrWe2003}, where the impact time is computed as $\tilde t=t_i+\alpha h$, where $h$ is the time-step, $t_i$ is the previous discrete time and $\alpha\in(0,1)$ is a variational variable of the discrete problem that has to be determined. The application of these ideas to nonholonomic implicit systems is not straightforward, as a projection from the tangent bundle of the configuration bundle to the tangent bundle of its boundary has to be used in order to couple the generalized momenta at the impact time with the velocity at the previous discrete time. Once the discrete Hamilton--d'Alembert--Pontryagin variational principle is established, the discrete equations follow from the discrete Lagrange--d'Alembert--Pontryagin principle, and are illustrated by simulating a bouncing ellipse and a nonholonomic spherical pendulum evolving inside a cylinder. Although it is not presented here, we aim to extend our work to include control inputs following the approach in \cite{YoMa2012}. Namely, external forces are regarded as inputs and added to the system. In addition, the controller may be computed by minimizing some cost functional, yielding optimal control equations \cite{CoMa2013}.

\section{Preliminaries}

In this paper, the nonsmooth variational approach introduced in \cite{RoCo2023,RoCo2024} to implicit nonholonomic Lagrangian systems with collisions is discretized following an analogous scheme to the one developed in \cite{FeMaOrWe2003} for unconstrained systems. The treatment of nonholonomic constraints is based on the discrete Lagrange--d'Alembert--Pontryagin principle introduced in \cite{LeOh2010,LeOh2011}. Let us briefly recall this approach, which utilizes retractions to discretize the constrain distribution.

\subsection{Retractions and discretization maps}\label{sec:retraction}

A \emph{retraction} (cf. \cite[Definition 9.1]{LeOh2011}) on a smooth manifold $Q$ is a map $\mathcal R:TQ\to Q$ such that
\begin{enumerate}
    \item $\mathcal R_q(0_q)=q$ for each $q\in Q$, where $\mathcal R_q=\mathcal R|_{T_qQ}:T_q Q\to Q$, and
    \item $T_{0_q}\mathcal R_q={\rm id}_{T_q Q}$ via the identification $T_{0_q}(T_q Q)\simeq T_q Q$.
\end{enumerate}

Given a retraction $\mathcal R:TQ\to Q$, we define the following map:
\begin{equation*}
\tilde{\mathcal R}:TQ\to Q\times Q,\quad v_q\mapsto\tilde{\mathcal R}_d(v_q)=(q,\mathcal R_q(v_q)).
\end{equation*}
Note that $\tilde{\mathcal R}(0_Q)=\Delta_Q$, where we denote $0_Q=\{0_q\in T_q Q\mid q\in Q\}$ and $\Delta_Q=\{(q,q)\in Q\times Q\mid q\in Q\}$. The fact that $T_{0_q}\mathcal R_q={\rm id}_{T_q Q}$ ensures that $\mathcal R_q:T_qQ\to Q$ is invertible around $0_q\in T_q Q$ for each $q\in Q$. Consequently, $\tilde{\mathcal R}$ is also invertible around $0_Q$, i.e., there exist $\mathcal U_{0_Q}\subset TQ$ neighborhood of $0_Q$ and $\mathcal U_{\Delta_Q}\subset Q\times Q$ neighborhood of the diagonal $\Delta_Q$, such that $\tilde{\mathcal R}|_{\mathcal U_{0_Q}}:\mathcal U_{0_Q}\to\mathcal U_{\Delta_Q}$ is invertible. The corresponding inverse defines a \emph{discretization map} (cf., for example, \cite{BaMa2022}),
\begin{equation*}
\Psi_d=\tilde{\mathcal R}|_{\mathcal U_{0_Q}}^{-1}:\mathcal U_{\Delta_Q}\to\mathcal U_{0_Q}.
\end{equation*}
For simplicity, we will write $\Psi_d:Q\times Q\to TQ$ understanding that this map is only locally defined around the diagonal $\Delta_Q$. 

Given a (possibly degenerate) continuous Lagrangian $L:TQ\to\mathbb R$, the corresponding discrete Lagrangian is defined as $L_d=L\circ\Psi_d:Q\times Q\to\mathbb R$. Typically, the discretization map and, thus, the discrete Lagrangian, depends on the timestep $h\in(0,1)$. This will be explicitly denoted by $L_d:Q\times Q\times(0,1)\to\mathbb R$.

\subsection{Discrete nonholonomic constraints}\label{sec:discreteconstraint}

Let $\Delta_Q\subset TQ$ be a (possibly nonholonomic) constraint distribution on a smooth manifold $Q$, and $\Delta_Q^\circ\subset T^*Q$ be its annihilator. By denoting $m=\dim Q-\dim\Delta_Q$, there exist $\omega^\mu\in\Omega^1(Q)$, $1\leq\mu\leq m$, such that
\begin{align*}
\Delta_Q^\circ(q)={\rm span}\{\omega_q^\mu\in T_q^*Q\mid1\leq\mu\leq m\},\qquad q\in Q.
\end{align*}
In order to discretize the constraint distribution, we utilize a retraction $\mathcal R:TQ\to Q$ as in \S\ref{sec:retraction}. Specifically, for each $1\leq\mu\leq m$, we introduce the discrete maps (cf. \cite[\S3]{CoMa2001})
\begin{align*}
\omega_{d+}^\mu:Q\times Q\to\mathbb R,\quad & (q,v)\mapsto\omega_{d+}^\mu(q,v)=\omega_q^\mu(\Psi_d(q,v))=\omega_q^\mu(\mathcal R_q^{-1}(v)),\\
\omega_{d-}^\mu:Q\times Q\to\mathbb R,\quad & (v,q)\mapsto\omega_{d-}^\mu(v,q)=-\omega_q^\mu(\Psi_d(q,v))=-\omega_q^\mu(\mathcal R_q^{-1}(v)).
\end{align*}
Therefore, the \emph{discrete constraint distributions} are defined as
\begin{align*}
\Delta_Q^{d+} & =\{(q,v)\in Q\times Q\mid\omega_{d+}^\mu(q,v)=0,~1\leq\mu\leq m\},\\
\Delta_Q^{d-} & =\{(v,q)\in Q\times Q\mid\omega_{d-}^\mu(v,q)=0,~1\leq\mu\leq m\}.
\end{align*}

\subsection{Variational formulation of discrete Dirac mechanics}\label{sec:lagrangedirac}

Let $Q$ be a vector space, $L_d:Q\times Q\times(0,1)\to\mathbb R$ be a discrete Lagrangian and $\Delta_Q\subset TQ$ be a (possibly nonholonomic) constraint. Let $N\in\mathbb Z^+$ be the number of steps. As usual, there are two possible choices for the discrete action functional arising from the two different types of generating function that can be used to discretize (cf. \cite[Definition 8.2]{LeOh2011}:
\begin{enumerate}[(i)]
    \item A sequence $\{(q_k,v_k,p_k)\in Q\times Q\times Q^*\mid 0\leq k\leq N\}$ is \emph{stationary} or \emph{critical} for the \emph{$(+)$-discrete Lagrange--d'Alembert--Pontryagin action} if
    \begin{equation*}
    \delta\sum_{k=0}^{N-1}L_d(q_k,v_k,h)+p_{k+1}\cdot(q_{k+1}-v_k)=0,
    \end{equation*}
    together with the constraint $(q_k,q_{k+1})\in\Delta_Q^{d+}$, $0\leq k\leq N-1$, for free variations such that $\delta q_0=\delta q_N=0$ and $\delta q_k\in\Delta_Q(q_k)$, $1\leq k\leq N-1$.
    \item A sequence $\{(v_k,q_k,p_k)\in Q\times Q\times Q^*\mid 0\leq k\leq N\}$ is \emph{stationary} or \emph{critical} for the \emph{$(-)$-discrete Lagrange--d'Alembert--Pontryagin action principle} if
    \begin{equation*}
    \delta\sum_{k=0}^{N-1}L_d(v_{k+1},q_{k+1},h)-p_k\cdot(q_k-v_{k+1})=0,
    \end{equation*}
    together with the constraint $(q_{k+1},q_k)\in\Delta_Q^{d-}$, $0\leq k\leq N-1$, for free variations such that $\delta q_0=\delta q_N=0$ and $\delta q_k\in\Delta_Q(q_k)$, $1\leq k\leq N-1$.
\end{enumerate}


\begin{theorem}
\begin{enumerate}
    \item A sequence $\{(q_k,v_k,p_k)\in Q\times Q\times Q^*\mid 0\leq k\leq N\}$ is stationary for the $(+)$-discrete Lagrange--d'Alembert--Pontryagin action if it satisfies the \emph{$(+)$-discrete nonholonomic implicit Euler--Lagrange equations} (also called \emph{$(+)$-discrete Lagrange--Dirac equations}:
    \begin{equation}\label{eq:+discreteLdAP}
    \left\{\begin{array}{ll}
    p_{k+1}=D_2 L_d(q_k,v_k,h),\qquad & \\
    q_{k+1}=v_k,& \\
    p_k+D_1 L_d(q_k,v_k,h)\in\Delta_Q^\circ(q_k), & k\neq 0,\\
    (q_k,v_k)\in\Delta_Q^{d+}, & k\neq0,
    \end{array}\right.\qquad 0\leq k\leq N-1.
    \end{equation}
    \item A sequence $\{(v_k,q_k,p_k)\in Q\times Q\times Q^*\mid 0\leq k\leq N\}$ is stationary for the $(-)$-discrete Lagrange--d'Alembert--Pontryagin action if it satisfies the \emph{$(-)$-discrete nonholonomic implicit Euler--Lagrange equations} (also called \emph{$(-)$-discrete Lagrange--d'Alembert--Pontryagin equations}):
    \begin{equation}\label{eq:-discreteLdAP}
    \left\{\begin{array}{ll}
    p_k=-D_1 L_d(v_{k+1},q_{k+1},h),\qquad & \\
    q_k=v_{k+1},& \\
    p_{k+1}-D_2 L_d(v_{k+1},q_{k+1},h)\in\Delta_Q^\circ(q_{k+1}), & k\neq 0,\\
    (v_{k+1},q_{k+1})\in\Delta_Q^{d-}, & k\neq0,
    \end{array}\right.\qquad0\leq k\leq N-1.
    \end{equation}
\end{enumerate}

\end{theorem}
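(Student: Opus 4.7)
The plan is to derive each system of equations directly from the corresponding variational principle by computing the first variation of the discrete action, grouping terms by the independent variations $\delta q_k$, $\delta v_k$, $\delta p_k$, and reading off the stationarity conditions under the appropriate admissibility constraints.

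For part (1), I would start with the $(+)$-discrete action
\[
S_d^+ = \sum_{k=0}^{N-1}\bigl[L_d(q_k,v_k,h) + p_{k+1}\cdot(q_{k+1}-v_k)\bigr],
\]
and compute
\[
\delta S_d^+ = \sum_{k=0}^{N-1}\Bigl[D_1 L_d(q_k,v_k,h)\!\cdot\!\delta q_k + D_2 L_d(q_k,v_k,h)\!\cdot\!\delta v_k + \delta p_{k+1}\!\cdot\!(q_{k+1}-v_k) + p_{k+1}\!\cdot\!(\delta q_{k+1}-\delta v_k)\Bigr].
\]
After shifting the index in the $p_{k+1}\cdot\delta q_{k+1}$ term and using the boundary conditions $\delta q_0 = \delta q_N = 0$, the coefficient of $\delta q_k$ for $1\le k\le N-1$ is $D_1 L_d(q_k,v_k,h)+p_k$. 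Setting the variations along $\delta p_{k+1}$ free gives the second equation $q_{k+1}=v_k$; setting $\delta v_k$ free gives the first equation $p_{k+1}=D_2 L_d(q_k,v_k,h)$; and requiring the sum to vanish for all $\delta q_k\in\Delta_Q(q_k)$ gives, by definition of the annihilator, the Lagrange--Dirac inclusion $p_k+D_1 L_d(q_k,v_k,h)\in\Delta_Q^\circ(q_k)$. Finally, the discrete nonholonomic constraint $(q_k,v_k)\in\Delta_Q^{d+}$ is simply the admissibility requirement already imposed on the principle (which, via $v_k=q_{k+1}$, coincides with the formulation in \S\ref{sec:discreteconstraint}).

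Part (2) follows by a symmetric computation applied to
\[
S_d^- = \sum_{k=0}^{N-1}\bigl[L_d(v_{k+1},q_{k+1},h) - p_k\cdot(q_k-v_{k+1})\bigr],
\]
where now $q_k$ sits in the first slot of $L_d$ via the shifted index and the sign of the momentum term is flipped. The $\delta p_k$ variation produces $q_k=v_{k+1}$; the $\delta v_{k+1}$ variation produces $p_k=-D_1 L_d(v_{k+1},q_{k+1},h)$; and the constrained $\delta q_{k+1}\in\Delta_Q(q_{k+1})$ variation, after re-indexing the $p_k\cdot q_k$ term, yields $p_{k+1}-D_2 L_d(v_{k+1},q_{k+1},h)\in\Delta_Q^\circ(q_{k+1})$ for $1\le k\le N-1$.

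The only delicate point, and the one I expect to require the most care, is the bookkeeping of the boundary contributions after the index shift in the momentum--position pairing: the endpoint conditions $\delta q_0=\delta q_N=0$ must be used precisely to suppress the would-be boundary terms and explain why the Lagrange--Dirac inclusions appear only for $k\ne 0$ (and correspondingly $k+1\ne N$ in the $(-)$ case). Everything else is a direct application of the fundamental lemma of the calculus of variations, once in its free-variation form (for $v_k$ and $p_k$) and once in its constrained form (for $\delta q_k\in\Delta_Q(q_k)$), the latter being what promotes an equality to the annihilator inclusion.
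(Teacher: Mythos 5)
Your approach is the right one, and it is essentially the same computation the paper itself carries out for the collision version of this result (the proof of Theorem \ref{theorem:discrete_ELeqs}); the present theorem is stated in the paper without proof, being imported from discrete Dirac mechanics, so there is no competing argument to compare against. Your part (1) is correct as written: the free variations in $\delta p_{k+1}$ and $\delta v_k$ give the first two equations for all $0\leq k\leq N-1$, the index shift plus $\delta q_0=\delta q_N=0$ isolates the coefficient of $\delta q_k$ for $1\leq k\leq N-1$, and restricting $\delta q_k$ to $\Delta_Q(q_k)$ turns the vanishing of that coefficient into the annihilator inclusion.

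The one point to fix is the index range in the $(-)$ case, where your write-up contradicts itself: in the body of part (2) you claim the inclusion for $1\leq k\leq N-1$, but in your closing paragraph you (correctly) say the excluded index should be $k+1=N$. Running the computation, $D_2L_d(v_{k+1},q_{k+1},h)\cdot\delta q_{k+1}$ contributes to $\delta q_j$ for $1\leq j\leq N$ and $-p_k\cdot\delta q_k$ contributes for $0\leq j\leq N-1$, so after imposing $\delta q_0=\delta q_N=0$ the surviving free variations are $\delta q_j$ with $1\leq j\leq N-1$, and the inclusion $p_{k+1}-D_2L_d(v_{k+1},q_{k+1},h)\in\Delta_Q^\circ(q_{k+1})$ is obtained precisely for $0\leq k\leq N-2$, not for $1\leq k\leq N-1$. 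Since the theorem asserts only sufficiency, imposing the extra condition at $k=N-1$ is harmless, but the condition at $k=0$ (the interior node $q_1$) is genuinely needed for stationarity and cannot be dropped; the label ``$k\neq 0$'' on the third and fourth lines of \eqref{eq:-discreteLdAP} should accordingly be read as ``$k\neq N-1$'', mirroring the way ``$k\neq 0$'' is the correct exclusion in the $(+)$ case. Make part (2) consistent with this and the argument is complete.
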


\section{Discrete nonholonomic implicit Lagrangian systems with collisions}

In the following, $Q$ will be assumed to be a vector space. Therefore, the tangent and cotangent bundles are given by $TQ=Q\times Q$ and $T^*Q=Q\times Q^*$. The \emph{admissible set} is some submanifold $S\subset Q$ such that $\dim Q =\dim S$. For each point on the boundary of the admissible set, $q\in\partial S$, there exists a vector subspace $V_q\subset Q$ such that $T_q\partial S=\{q\}\times V_q$ and $T_q^*\partial S=\{q\}\times V_q^*$. Hence, the tangent and cotangent bundles of $\partial S$ are given by $T\partial S=\bigcup_{q\in\partial S}\{q\}\times V_q$ and $T^*\partial S=\bigcup_{q\in\partial S}\{q\}\times V_q^*$. The natural inclusion $\imath_{S}:\partial S\to S$ induces canonical morphisms
\begin{align*}
T\imath_{S} :T\partial S\to TQ|_{\partial S}=\partial S\times Q,\qquad\imath_{S}^*:T^*Q|_{\partial S}=\partial S\times Q^*\to T^*\partial S.
\end{align*}
A choice of a linear projection $(\pi_S)_q:Q\to V_q$ for each $q\in\partial S$ determines a vector bundle projection
\begin{align*}
\pi_S:TQ|_{\partial S}=\partial S\times Q\to T\partial S,\quad v_q=(q,v)\mapsto\pi_S(v_q)=(q,(\pi_S)_q(v)).
\end{align*}
Note that $\pi_S$ is a left inverse of $T\imath_{S} $, i.e., $\pi_S\circ T\imath_{S} ={\rm id}_{T\partial S}$; and, thus, the adjoint map $\pi_S^*:T^*\partial S\to T^*Q|_{\partial S}=\partial S\times Q^*$ is a right inverse of $\imath_{S}^*$, i.e., $\imath_{S}^*\circ\pi_S^*={\rm id}_{T^*\partial S}$. The situation is summarized in the following diagrams:
\begin{align*}
\begin{tikzpicture}
\matrix (m) [matrix of math nodes,row sep=6em,column sep=4em,minimum width=2em, ampersand replacement=\&]
{ T\partial S \& TQ|_{\partial S}=\partial S\times Q,\\};
\path[-stealth]
(m-1-1) edge [] node [above] {$T\imath_S$} (m-1-2)
(m-1-2) edge [bend left] node [below] {$\pi_S$} (m-1-1);
\end{tikzpicture}\quad
\begin{tikzpicture}
\matrix (m) [matrix of math nodes,row sep=6em,column sep=4em,minimum width=2em, ampersand replacement=\&]
{T^*Q|_{\partial S}=\partial S\times Q^* \& T^*\partial S.\\};
\path[-stealth]
(m-1-1) edge [] node [above] {$\imath_S^*$} (m-1-2)
(m-1-2) edge [bend left] node [below] {$\pi_S^*$} (m-1-1);
\end{tikzpicture}
\end{align*}

Let $L_d:Q\times Q\times(0,1)\to\mathbb R$ be a discrete (possibly degenerate) Lagrangian, where the third coordinate represents the timestep, which will be denoted by $h$.

\subsection{Discrete configuration and phase spaces}

Let $[t,T]\subset\mathbb R$ be an interval and $N\in\mathbb Z^+$ be the number of steps. The discrete interval is $\{t_k=t+kh\mid 0\leq k\leq N\}$, where $h=(T-t)/N$ is the timestep. Note that $t_0=t$ and $t_N=T$. We only consider one collision at a fixed step $i\in\{0,\dots,N\}$ (which is assumed to be known) occurring at a fixed time $\tilde t=t_i+\alpha h$ for some $\alpha\in(0,1)$ (which is assumed to be unknown). Given $\tilde\alpha\in(0,1)$, the \emph{discrete interval with a unique collision} is defined as
\begin{equation*}
I(i,\tilde\alpha)=\{t_0,\dots,t_i,t_i+\tilde\alpha h,t_{i+1},\hdots,t_N\}.
\end{equation*}
In the same vein, the \emph{discrete path space} is $\Omega_d(S,i,\tilde\alpha)=(0,1)\times\mathcal S_d(i,\tilde\alpha)$, where
\begin{align}\label{eq:Sd}
\mathcal S_d(i,\tilde\alpha) & =\big\{q_d:I(i,\tilde\alpha)\to S\mid q_d(t_k)\in{\rm int}\,S,~0\leq k\leq N,~q_d(t_i+\tilde\alpha h)\in\partial S\big\},
\end{align}
with ${\rm int}\,S=S-\partial S$ denoting the interior of $S$. Note that $\Omega_d(S,i,\tilde\alpha)\simeq(0,1)\times({\rm int}\,S)^{N+1}\times\partial S$. For brevity, we denote $q_k=q_d(t_k)$ for each $0\leq k\leq N$ and $\tilde q=q_d(t_i+\tilde\alpha h)$. Analogously, let us introduce a  discrete path space where the admissible set is disregarded, $\mathcal Q_d(i,\tilde\alpha)=\left\{v_d:I(i,\tilde\alpha)\to Q\right\}$. Again, for brevity we denote $v_k=v_d(t_k)$, $0\leq k\leq N$, and $\tilde v=v_d(t_i+\tilde\alpha h)$. Note that
\begin{align*}
\mathcal S_d(i,\tilde\alpha)\times\mathcal Q_d(i,\tilde\alpha)& =\{(q_d,v_d):I(i,\tilde\alpha)\to S\times Q\mid q_k\in{\rm int}\,S,~1\leq k\leq N,~\tilde q\in\partial S\}    
\end{align*}
defines a vector bundle over $\mathcal S_d(i,\tilde\alpha)$. For the $(-)$-case, we will use $\mathcal Q_d(i,\tilde\alpha)\times\mathcal S_d(i,\tilde\alpha)$ instead, and denote its elements by $(v_d,q_d)$.

Given that $TS=S\times Q$ and $T^*S=S\times Q^*$, the tangent and cotangent spaces of $\mathcal S_d(i,\tilde\alpha)$ at $q_d\in\mathcal S_d(i,\tilde\alpha)$ are given by
\begin{align*}
T_{q_d}\mathcal S_d(i,\tilde\alpha) 
& =\left\{\nu_d:I(i,\tilde\alpha)\to Q\mid\nu_d(t_i+\tilde\alpha h)\in V_{\tilde q}\right\},\\
T_{q_d}^\star\mathcal S_d(i,\tilde\alpha) 
& =\left\{p_d:I(i,\tilde\alpha)\to Q^*\mid  p_d(t_i+\tilde\alpha h)\in V_{\tilde q}^*\right\}.
\end{align*}
As above, we denote $\nu_k=\nu_d(t_k)$ and $p_k=p_d(t_k)$ for each $0\leq k\leq N$, as well as $\tilde\nu=\nu_d(t_i+\tilde\alpha h)$ and $\tilde p=p_d(t_i+\tilde\alpha h)$. As usual, the tangent and cotangent bundles are denoted by $T\mathcal S_d(i,\tilde\alpha)=\bigsqcup_{q_d\in\mathcal S(i,\tilde\alpha)}T_{q_d}\mathcal S_d(i,\tilde\alpha)$ and $T^\star\mathcal S_d(i,\tilde\alpha)=\bigsqcup_{q_d\in\mathcal S(i,\tilde\alpha)}T_{q_d}^\star\mathcal S_d(i,\tilde\alpha)$, respectively. 

In order to work with variations of a discrete path, we need to introduce the iterated bundles. Note that $T(TS)=S\times Q\times Q\times Q$ and $T(T^*S)=S\times Q^*\times Q\times Q^*$, whence, for each $(q_d,v_d,p_d)\in(\mathcal S_d(i,\tilde\alpha)\times\mathcal Q_d(i,\tilde\alpha))\oplus T^\star\mathcal S_d(i,\tilde\alpha)$, we have
\begin{align*}
& T_{(q_d,v_d)}(\mathcal S_d(i,\tilde\alpha)\times\mathcal Q_d(i,\tilde\alpha))=\big\{(\delta q_d,\delta v_d):I(i,\tilde\alpha)\to Q\times Q\mid\delta\tilde q,\delta\tilde v\in V_{\tilde q}\big\},\\
& T_{(v_d,q_d)}(\mathcal Q_d(i,\tilde\alpha)\times\mathcal S_d(i,\tilde\alpha))=\big\{(\delta v_d,\delta q_d):I(i,\tilde\alpha)\to Q\times Q\mid\delta\tilde v,\delta\tilde q\in V_{\tilde q}\big\},\\
& T_{(q_d,p_d)}(T^\star\mathcal S_d(i,\tilde\alpha))=\big\{(\delta q_d,\delta p_d):I(i,\tilde\alpha)\to Q\times Q^*\mid\delta\tilde q\in V_{\tilde q},~\delta\tilde p\in V_{\tilde q}^*\big\}.
\end{align*}

Given a (possibly nonholonomic) constrain distribution $\Delta_Q\subset TQ$, the constrained path spaces are defined as (recall \S\ref{sec:discreteconstraint})
\begin{align*}
\Delta_S^{d+}(i,\tilde\alpha) & =\{(q_d,v_d)\in\mathcal S_d(i,\tilde\alpha)\times\mathcal Q_d(i,\tilde\alpha)\mid(q_k,v_k),(\tilde q,\tilde v)\in\Delta_Q^{d+},~0\leq k\leq N\},\\
\Delta_S^{d-}(i,\tilde\alpha) & =\{(v_d,q_d)\in\mathcal Q_d(i,\tilde\alpha)\times\mathcal S_d(i,\tilde\alpha)\mid(v_k,q_k),(\tilde v,\tilde q)\in\Delta_Q^{d-},~0\leq k\leq N\}.
\end{align*}
Analogously, for each $(q_d,v_d)\in\mathcal S_d(i,\tilde\alpha)\times\mathcal Q_d(i,\tilde\alpha)$, we introduce the following path spaces:
\begin{align*}
& \Delta_{TS}^{d+}(i,\tilde\alpha)(q_d,v_d)=\{(\delta q_d,\delta v_d)\in T_{(q_d,v_d)}(\mathcal S_d(i,\tilde\alpha)\times\mathcal Q_d(i,\tilde\alpha))\mid\delta q_k\in\Delta_Q(q_k),~0\leq k\leq N,~\delta\tilde q\in\Delta_Q(\tilde q)\},\\
& \Delta_{TS}^{d-}(i,\tilde\alpha)(v_d,q_d)=\{(\delta v_d,\delta q_d)\in T_{(v_d,q_d)}(\mathcal Q_d(i,\tilde\alpha)\times\mathcal S_d(i,\tilde\alpha))\mid\delta q_k\in\Delta_Q(q_k),~0\leq k\leq N,~\delta\tilde q\in\Delta_Q(\tilde q)\}.
\end{align*}

\color{black}

\subsection{Discrete nonholonomic implicit Euler--Lagrange equations with collisions}\label{sec:EL_eqs}

The two possible choices for the discrete action functional are given by:
\begin{enumerate}[(i)]
    \item The \emph{$(+)$-discrete Lagrange--Pontryagin action}, $\mathbb S_{d+}:(0,1)\times(\mathcal S_d(i,\tilde\alpha)\times\mathcal Q_d(i,\tilde\alpha))\oplus T^\star\mathcal S_d(i,\tilde\alpha)\to\mathbb R$, is defined as
    \begin{align}\label{eq:Sd+}
    \mathbb S_{d+}(\alpha,q_d,v_d,p_d)= & \sum_{\substack{k=0\\k\neq i}}^{N-1}\big(L_d(q_k,v_k,h)+p_{k+1}\cdot(q_{k+1}-v_k)\big)+L_d(q_i,v_i,\alpha h)\\\nonumber
    & +(\pi_S)_{\tilde q}^*(\tilde p)\cdot(\tilde q-v_i)+L_d(\tilde q,\tilde v,(1-\alpha)h)+p_{i+1}\cdot(q_{i+1}-\tilde v).
    \end{align}
    \item The \emph{$(-)$-discrete Lagrange--Pontryagin action}, $\mathbb S_{d-}:(0,1)\times(\mathcal Q_d(i,\tilde\alpha)\times\mathcal S_d(i,\tilde\alpha))\oplus T^\star\mathcal S_d(i,\tilde\alpha)\to\mathbb R$, is defined as
    \begin{align}\label{eq:Sd-}
    \mathbb S_{d-}(\alpha,v_d,q_d,p_d)= & \sum_{\substack{k=0\\k\neq i}}^{N-1}\big(L_d(v_{k+1},q_{k+1},h)-p_k\cdot(q_k-v_{k+1})\big)+L_d(\tilde v,\tilde q,\alpha h)\\\nonumber
    & -p_i\cdot(q_i-\tilde v)+L_d(v_{i+1},q_{i+1},(1-\alpha)h)-(\pi_S)_{\tilde q}^*(\tilde p)\cdot(\tilde q-v_{i+1}).
    \end{align}
\end{enumerate}

The following definition introduces the \emph{discrete Hamilton--d'Alembert--Pontryagin principle}.

\begin{definition}\label{def:variationalprinciple}
\begin{enumerate}[(i)]
    \item A discrete path
    \begin{equation*}
    \texttt c_{d+}=(\alpha,q_d,v_d,p_d)\in(0,1)\times\Delta_S^{d+}(i,\tilde\alpha)\oplus T^*\mathcal S_d(i,\tilde\alpha)
    \end{equation*}
    is \emph{stationary} (or \emph{critical}) for the $(+)$-discrete Lagrange--d'Alembert--Pontryagin action if it satisfies
    \begin{equation*}
    {\rm d}\,\mathbb S_{d+}(\texttt c_{d+})(\delta\texttt c_{d+})=0,
    \end{equation*}
    for each variation $\delta\texttt c_{d+}=(\delta\alpha,\delta q_d,\delta v_d,\delta p_d)\in T_\alpha(0,1)\times\Delta_{TS}^{d+}(i,\tilde\alpha)(q_d,v_d)\times T_{(q_d,p_d)}(T^\star\mathcal S_d(i,\tilde\alpha))$ vanishing at the endpoints $\delta q_0=\delta q_N=0$.
    \item A discrete path
    \begin{equation*}
    \texttt c_{d-}=(\alpha,v_d,q_d,p_d)\in(0,1)\times\Delta_S^{d-}(i,\tilde\alpha)\oplus T^*\mathcal S_d(i,\tilde\alpha)
    \end{equation*}
    is \emph{stationary} or \emph{critical} for the $(-)$-discrete Lagrange--d'Alembert--Pontryagin action if it satisfies
    \begin{equation*}
    {\rm d}\,\mathbb S_{d-}(\texttt c_{d-})(\delta\texttt c_{d-})=0,
    \end{equation*}
    for each variation $\delta\texttt c_{d-}=(\delta\alpha,\delta v_d,\delta q_d,\delta p_d)\in T_\alpha(0,1)\times\Delta_{TS}^{d-}(i,\tilde\alpha)(v_d,q_d)\times T_{(q_d,p_d)}(T^\star\mathcal S_d(i,\tilde\alpha))$ vanishing at the endpoints $\delta q_0=\delta q_N=0$.
\end{enumerate} 
\end{definition}

The condition $(\delta q_d,\delta v_d)\in\Delta_{TS}^{d+}(i,\tilde\alpha)(q_d,v_d)$ implies that $\delta q_k\in\Delta_Q(q_k)$, $0\leq k\leq N$, and $\delta\tilde q\in\Delta_Q(\tilde q)$, and analogous for $(\delta v_d,\delta q_d)\in\Delta_{TS}^{d-}(i,\tilde\alpha)(v_d,q_d)$. These conditions are imposed \emph{after} taking variations inside the summation. We are ready to compute the discrete nonholonomic implicit Euler--Lagrange equations with collisions.

\begin{theorem}\label{theorem:discrete_ELeqs}
Consider the following discrete paths:
\begin{align*}
\texttt c_{d+}=(\alpha,q_d,v_d,p_d)\in(0,1)\times\Delta_S^{d+}(i,\tilde\alpha)\oplus T^*\mathcal S_d(i,\tilde\alpha),\\
\texttt c_{d-}=(\alpha,v_d,q_d,p_d)\in(0,1)\times\Delta_S^{d-}(i,\tilde\alpha)\oplus T^*\mathcal S_d(i,\tilde\alpha).
\end{align*}
Then, the following statements hold:
\begin{enumerate}[(i)]
    \item $\texttt c_{d+}$ is stationary for $\mathbb S_{d+}$ if and only if it satisfies the $(+)$-discrete nonholonomic implicit Euler--Lagrange equations \eqref{eq:+discreteLdAP} for $k\neq i$, together with the $(+)$-discrete conditions for the \emph{elastic impact}:
    \begin{align*}
    \left\{\begin{array}{l}
    D_1 L_d(q_i,v_i,\alpha h)+p_i\in\Delta_Q^\circ(q_i),\\
    (q_i,v_i)\in\Delta_Q^{d+},\\
    \tilde q=v_i,\\
    \tilde q\in\partial S,\\
    D_3 L_d(q_i,v_i,\alpha h)=D_3 L_d(\tilde q,\tilde v,(1-\alpha)h),\\
    (\imath_{S})_{\tilde q}^*(D_1 L_d(\tilde q,\tilde v,(1-\alpha)h)+\tilde p)\in(\imath_S)_{\tilde q}^*\left(\Delta_Q^\circ(\tilde q)\right),\\
    (\tilde q,\tilde v)\in\Delta_Q^{d+},\\
    (\pi_S)_{\tilde q}^*(\tilde p)=D_2 L_d(q_i,v_i,\alpha h),\\
    p_{i+1}=D_2 L_d(\tilde q,\tilde v,(1-\alpha)h),\\
    q_{i+1}=\tilde v.
    \end{array}\right.
    \end{align*}
    \item $\texttt c_{d-}$ is stationary for $\mathbb S_{d-}$ if and only if it satisfies the $(-)$-discrete nonholonomic implicit Euler--Lagrange equations \eqref{eq:-discreteLdAP} for $k\neq i$, together with the $(-)$-discrete conditions for the \emph{elastic impact}:
    \begin{align*}
    \left\{\begin{array}{l}
    (\imath_S)_{\tilde q}^*(-D_2 L_d(\tilde v,\tilde q,\alpha h)+\tilde p)\in(\imath_S)_{\tilde q}^*\left(\Delta_Q^\circ(\tilde q)\right),\\
    (\tilde v,\tilde q)\in\Delta_Q^{d-},\\
    q_i=\tilde v,\\
    \tilde q\in\partial S,\\
    D_3 L_d(\tilde v,\tilde q,\alpha h)=D_3 L_d(v_{i+1},q_{i+1},(1-\alpha)h),\\
    -D_2 L_d(v_{i+1},q_{i+1},(1-\alpha)h)+p_{i+1}\in\Delta_Q^\circ(q_{i+1}),\\
    (v_{i+1},q_{i+1})\in\Delta_Q^{d-},\\
    p_i=-D_1 L_d(\tilde v,\tilde q,\alpha h),\\
    (\pi_S)_{\tilde q}^*(\tilde p)=-D_1 L_d(q_{i+1},v_{i+1},(1-\alpha)h),\\
    \tilde q=v_{i+1}.
    \end{array}\right.
    \end{align*}
\end{enumerate}
\end{theorem}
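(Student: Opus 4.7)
The plan is to apply Definition 2.2 directly: expand ${\rm d}\,\mathbb{S}_{d+}(\texttt c_{d+})(\delta\texttt c_{d+})$ as a sum of pairings, collect the coefficient of each independent variation, and then invoke the fundamental lemma of the calculus of variations, taking into account the constraints $\delta q_k\in\Delta_Q(q_k)$, $\delta\tilde q\in\Delta_Q(\tilde q)\cap V_{\tilde q}$, and $\delta q_0=\delta q_N=0$. Part (ii) follows by an entirely symmetric computation on $\mathbb{S}_{d-}$, so I would only write out (i) in detail and indicate how to dualize at the end.

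First I would split $\mathbb{S}_{d+}$ into the ``bulk'' sum over $k\in\{0,\dots,N-1\}\setminus\{i\}$ and the four ``collision terms'' $L_d(q_i,v_i,\alpha h)$, $(\pi_S)_{\tilde q}^*(\tilde p)\cdot(\tilde q-v_i)$, $L_d(\tilde q,\tilde v,(1-\alpha)h)$, and $p_{i+1}\cdot(q_{i+1}-\tilde v)$. For the bulk, the calculation is exactly the one behind Theorem 2.1: after a discrete summation by parts that shifts the index in $\sum p_{k+1}\cdot(q_{k+1}-v_k)$, the coefficients of $\delta p_{k+1}$, $\delta v_k$, and $\delta q_k$ (the latter with values in $\Delta_Q(q_k)$ and vanishing at $k=0,N$) reproduce exactly equations \eqref{eq:+discreteLdAP} for $k\neq i$. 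Only the coefficients of $\delta q_i$ and $\delta q_{i+1}$ pick up additional contributions from the collision block and must be recombined with them before applying the fundamental lemma.

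Next I would read off the remaining equations from the collision block. Varying $\alpha$ in $L_d(q_i,v_i,\alpha h)+L_d(\tilde q,\tilde v,(1-\alpha)h)$ gives the matching condition $D_3 L_d(q_i,v_i,\alpha h)=D_3 L_d(\tilde q,\tilde v,(1-\alpha)h)$. Varying $\tilde p\in V_{\tilde q}^*$, using $(\pi_S)_{\tilde q}^*(\delta\tilde p)\cdot(\tilde q-v_i)=\delta\tilde p\cdot(\pi_S)_{\tilde q}(\tilde q-v_i)$, forces $(\pi_S)_{\tilde q}(\tilde q-v_i)=0$, which under the ambient identification is written $\tilde q=v_i$ in the statement. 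Varying $v_i$ freely in $Q$ yields $(\pi_S)_{\tilde q}^*(\tilde p)=D_2 L_d(q_i,v_i,\alpha h)$; varying $\tilde v\in V_{\tilde q}$ produces the relation matching $D_2 L_d(\tilde q,\tilde v,(1-\alpha)h)$ with $p_{i+1}$; and varying $\tilde q\in\Delta_Q(\tilde q)\cap V_{\tilde q}$, while keeping track of the $\tilde q$-dependence of the projection $\pi_S$, yields the boundary Lagrange--Dirac equation $(\imath_S)_{\tilde q}^*(D_1 L_d(\tilde q,\tilde v,(1-\alpha)h)+\tilde p)\in(\imath_S)_{\tilde q}^*(\Delta_Q^\circ(\tilde q))$. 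Varying $\delta q_i\in\Delta_Q(q_i)$ and $\delta p_{i+1}\in Q^*$ finally produces $D_1 L_d(q_i,v_i,\alpha h)+p_i\in\Delta_Q^\circ(q_i)$ and $q_{i+1}=\tilde v$, respectively. The boundary requirement $\tilde q\in\partial S$ and the discrete distributional constraints $(q_k,v_k),(\tilde q,\tilde v)\in\Delta_Q^{d+}$ are inherited from the path space where $\texttt c_{d+}$ lives.

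The main obstacle is the careful bookkeeping around the pairing $(\pi_S)_{\tilde q}^*(\tilde p)\cdot(\tilde q-v_i)$, whose base point $\tilde q$ also varies: to recover the correct ``tangential'' impact equations one must use the identity $\imath_S^*\circ\pi_S^*={\rm id}_{T^*\partial S}$ and split the variation into its $V_{\tilde q}$-component (absorbed by $\tilde p\in V_{\tilde q}^*$ and encoding the impulsive reaction force) and its $\Delta_Q(\tilde q)$-restricted component (which, after pullback by $\imath_S^*$, produces the genuine boundary Lagrange--Dirac equation). Once every coefficient has been identified the converse implication is immediate by substitution, since the listed equations are precisely what is needed to make every term of ${\rm d}\,\mathbb{S}_{d+}$ vanish for every admissible variation.
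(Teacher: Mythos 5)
Your proposal follows essentially the same route as the paper's proof: expand the first variation of $\mathbb S_{d+}$, separate the bulk sum (which reproduces \eqref{eq:+discreteLdAP} for $k\neq i$) from the collision block, collect the coefficient of each independent constrained variation ($\delta\alpha$, $\delta q_i\in\Delta_Q(q_i)$, $\delta v_i$, $\delta\tilde q\in\Delta_Q(\tilde q)\cap V_{\tilde q}$, $\delta\tilde v$, $\delta\tilde p\in V_{\tilde q}^*$, $\delta p_{i+1}$), apply the fundamental lemma, and inherit $\tilde q\in\partial S$ and the $\Delta_Q^{d\pm}$ memberships from the path space. If anything you are more explicit than the paper about the two delicate points --- that the $\delta\tilde p$-variation only enforces $(\pi_S)_{\tilde q}(\tilde q-v_i)=0$, and that converting the $\delta\tilde q$-condition into the stated inclusion requires the identity $\bigl(\Delta_Q(\tilde q)\cap V_{\tilde q}\bigr)^\circ\cap V_{\tilde q}^*=\Delta_Q^\circ(\tilde q)\cap V_{\tilde q}^*$ together with $\ker(\imath_S)_{\tilde q}^*=V_{\tilde q}^\circ$ --- so there is no gap relative to the paper's argument.
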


\begin{proof}
By computing the variation of the $(+)$-discrete action functional and rearranging terms, we obtain:
\begin{align*}
\delta\mathbb S_{d+}(\texttt c_{d+})(\delta\texttt c_{d+})= & \sum_{\substack{k=0\\k\neq i-1}}^{N-2}(D_1 L_d(q_{k+1},v_{k+1},h)+p_{k+1})\cdot\delta q_{k+1}+\sum_{\substack{k=0\\k\neq i}}^{N-1}(D_2 L_d(q_k,v_k,h)-p_{k+1})\cdot\delta v_k\\
& +(D_1 L_d(q_i,v_i,\alpha h)+p_i)\cdot\delta q_i+(D_2 L_d(q_i,v_i,\alpha h)-(\pi_S)_{\tilde q}^*(\tilde p))\cdot\delta v_i\\
& +(\imath_S)_{\tilde q}^*(D_1 L_d(\tilde q,\tilde v,(1-\alpha)h)+\tilde p)\cdot\delta\tilde q+(D_2 L_d(\tilde q,\tilde v,(1-\alpha)h)-p_{i+1})\cdot\delta\tilde v\\
& +\sum_{\substack{k=0\\k\neq i}}^{N-1}\delta p_{k+1}\cdot(q_{k+1}-v_k)+(\pi_S)_{\tilde q}^*(\delta\tilde p)\cdot(\tilde q-v_i)+\delta p_{i+1}\cdot(q_{i+1}-\tilde v)\\
& +h\,(D_3 L_d(q_i,v_i,\alpha h)-D_3 L_d(\tilde q,\tilde v,(1-\alpha)h))\,\delta\alpha.
\end{align*}
We conclude by recalling that $\delta\alpha\in T_\alpha(0,1)\simeq\mathbb R$, $\delta q_k\in\Delta_Q(q_k)$, $1\leq k\leq N-1$, $\delta v_k,\delta\tilde v\in Q$, $\delta p_k\in Q^*$, $0\leq k\leq N$, $\delta\tilde q\in\Delta_Q(\tilde q)\cap V_{\tilde q}$ and $\delta\tilde p\in V_{\tilde q}^*$. The conditions $\tilde q\in\partial S$, $(q_k,v_k)\in\Delta_Q^{d+}$ and $(\tilde q,\tilde v)\in\Delta_Q^{d+}$ are by construction. To conclude, note that $V_{\tilde q}^\circ\cap V_{\tilde q}^*=\{0\}$, whence
\begin{equation*}
\left(\Delta_Q(\tilde q)\cap V_{\tilde q}\right)^\circ\cap V_{\tilde q}^*=\left(\Delta_Q^\circ(\tilde q)+V_{\tilde q}^\circ\right)\cap V_{\tilde q}^*=\Delta_Q^\circ(\tilde q)\cap V_{\tilde q}^*.
\end{equation*}
The $(-)$-discrete case follows from an analogous computation.
\end{proof}

As usual, the $(+)$-discrete equations are utilized to perform forward in time simulations, whereas the $(-)$-discrete equations are appropriate to simulate backwards in time. By utilizing Lagrange multipliers and the definition of $\Delta_Q^{+}$, this theorem may be implemented as described in Algorithm \ref{alg}. The backwards in time implementation is analogous. The algorithm checks at each iteration the admissibility of the solution. Whenever the new state is not admissible, it is removed and the conditions for the elastic impact are solved.

\begin{algorithm}[!ht]
\caption{Forward implementation of Theorem \ref{theorem:discrete_ELeqs}.}\label{alg}
\begin{enumerate}[$\bullet$]
    \item\textbf{input}: $(q_0,v_0)\in S\times Q$, $p_0=D_2 L_d(q_0,v_0,h)$.
    \item \textbf{while} $t_k<t_N$:
    \begin{enumerate}[$\circ$]
        \item solve:
        \begin{equation*}
        \left\{\begin{array}{l}
        p_{k+1}=D_2 L_d(q_k,v_k,h),\\
        q_{k+1}=v_k,\\
        D_1 L_d(q_{k+1},v_{k+1},h)+p_{k+1}=\lambda_{k+1,\mu}\,\omega_{q_{k+1}}^\mu,\\
        \omega_{d+}^\mu(q_{k+1},v_{k+1})=0,~1\leq\mu\leq m,
        \end{array}\right.
        \end{equation*}
        for $(q_{k+1},v_{k+1},p_{k+1})\in Q\times Q\times Q^*$ and $\{\lambda_{k+1,\mu}\in\mathbb R\mid1\leq\mu\leq m\}$.
        \item \textbf{if} $q_{k+1}\not\in S$:
    \begin{enumerate}[$\diamond$]
        \item delete: $v_k$, $(q_{k+1},v_{k+1},p_{k+1})$.
        \item solve:
        \begin{equation*}
        \left\{\begin{array}{l}
        D_1 L_d(q_k,v_k,\alpha h)+p_k=\lambda_{k,\mu}\,\omega_{q_k}^\mu,\\
        \omega_{d+}^\mu(q_k,v_k)=0,~1\leq\mu\leq m,\\
        \tilde q=v_k,\\
        \tilde q\in\partial S,
        \end{array}\right.
        \end{equation*} 
        for $(\alpha,\tilde q,v_k)\in(0,1)\times\partial S\times Q$ and $\{\lambda_{k,\mu}\in\mathbb R\mid 1\leq\mu\leq m\}$.
        \item solve: 
        \begin{equation*}
        \left\{\begin{array}{l}
        D_3 L_d(q_k,v_k,\alpha h)=D_3 L_d(\tilde q,\tilde v,(1-\alpha)h),\\
        (\imath_{S})_{\tilde q}^*\left(D_1 L_d(\tilde q,\tilde v,(1-\alpha)h)+\tilde p-\tilde\lambda_\mu\,\omega_{\tilde q}^\mu\right)=0,\\
        \omega_{d+}^\mu(\tilde q,\tilde v)=0,~1\leq\mu\leq m,\\
        (\pi_S)_{\tilde q}^*(\tilde p)=D_2 L_d(q_k,v_k,\alpha h),
        \end{array}\right.
        \end{equation*}
        for $(\tilde v,\tilde p)\in V_{\tilde q}\times V_{\tilde q}^*$ and $\left\{\tilde\lambda_\mu\in\mathbb R\mid1\leq\mu\leq m\right\}$.
        \item solve:
        \begin{equation*}
        \left\{\begin{array}{l}
        p_{k+1}=D_2 L_d(\tilde q,\tilde v,(1-\alpha)h),\\
        q_{k+1}=\tilde v.
        \end{array}\right.
        \end{equation*}
        for $(q_{k+1},p_{k+1})\in S\times Q^*$.
        \item solve:
        \begin{equation*}
        \left\{\begin{array}{l}
        D_1L_d(q_{k+1},v_{k+1},h)+p_{k+1}=\lambda_{k+1,\mu}\,\omega_{q_{k+1}}^\mu,\\
        \omega_{d+}^\mu(q_{k+1},v_{k+1})=0,~1\leq\mu\leq m.
        \end{array}\right.
        \end{equation*}
        for $v_{k+1}\in Q$ and $\{\lambda_{k+1,\mu}\in\mathbb R\mid1\leq\mu\leq m\}$.
    \end{enumerate}
    \end{enumerate}
\end{enumerate}
\end{algorithm}

\section{Examples}

\subsection{Bouncing particle}

Consider a particle in two dimensions subject to the gravitational force that collides with the floor. The configuration space and the admissible set are $Q=\mathbb R^2$ and $S=\{q=(x,y)\in\mathbb R^2\mid y\geq 0\}$, respectively, whose boundary is given by $\partial S=\{\tilde q=(x,y)\in\mathbb R^2\mid y=0\}$. Henceforth, the vectors and covectors are expressed in the following bases: $TQ={\rm span}\{\partial_x,~\partial_y\}$, $T^*Q={\rm span}\{dx,~dy\}$, $T\partial S={\rm span}\{\partial_x\}$ and $T^*\partial S={\rm span}\{dx\}$. The system is unconstrained, i.e., $\Delta_Q=Q$ and, thus, $\Delta_Q^{d+}=Q\times Q$. By denoting $\tilde q=(x,0)\in\partial S$, we have the following maps:
\begin{align*}
T_{\tilde q}\imath_S:T\partial S\to T\mathbb R^2 |_{\partial S},\quad & \tilde v=(\tilde v_x)\mapsto T_{\tilde q}\imath_S(\tilde v)=(\tilde v_x,0),\\
(\imath_S^*)_{\tilde q}:T^*\mathbb R^2 |_{\partial S}\to T^*\partial S,\quad & p=(p_x,p_y)\mapsto(\imath_S^*)_{\tilde q}(p)=(p_x),\\
(\pi_S)_{\tilde q}:T\mathbb R^2 |_{\partial S}\to T\partial S,\quad & v=(v_x,v_y)\mapsto(\pi_S)_{\tilde q}(v)=(v_x),\\
(\pi_S^*)_{\tilde q}:T^*\partial S\to T^*\mathbb R^2 |_{\partial S},\quad & \tilde p=(\tilde p_x)\mapsto(\pi_S^*)_{\tilde q}(\tilde p)=(\tilde p_x,0).
\end{align*}
The Lagrangian of the system $L:T\mathbb R^2 \to\mathbb R$ consists of the kinetic energy together with the gravitational potential, i.e.,
\begin{equation*}
L(q,v)=\frac{1}{2}m(v_x^2+v_y^2)-mgy,
\end{equation*}
for each $(q,v)=((x,y),(v_x,v_y))\in T\mathbb R^2$, where $m>0$ and $g>0$ are the mass and the gravitational acceleration, respectively. Lastly, we consider the discrete Lagrangian $L_d:\mathbb R^2 \times\mathbb R^2 \times(0,1)\to\mathbb R$ defined as
\begin{align*}
& L_d(q,v,h)=h\,L\left(\frac{q+v}{2},\frac{v-q}{h}\right)=\frac{1}{2h}m\left((v_x-x)^2+(v_y-y)^2\right)-\frac{1}{2}hmg(y+v_y).
\end{align*}
Therefore, the relation between the continuous and the discrete initial conditions is given by
\begin{equation*}
q_0=q(0)-\frac{h}{2}v(0),\qquad v_0=q(0)+\frac{h}{2}v(0).
\end{equation*}

\subsection{Bouncing 2-dimensional rigid body}

Let us extend the previous example to a 2-dimensional rigid body subject to the gravitational force that collides with the floor and rotates about a perpendicular axis. The configuration space is the Euclidean group in two dimensions, i.e., $Q={\rm SE}(2)={\rm SO}(2)\ltimes\mathbb R^2$, where we pick coordinates $q=(\theta,x,y)\in Q$ with $\theta$ and $(x,y)$ denoting the angle of rotation and the position of the axis of rotation (which we assume to be inside the body). The system is unconstrained, i.e., $\Delta_Q=Q$ and, thus, $\Delta_Q^{d+}=Q\times Q$. The distance between the axis and the edge of the body is given by a piecewise smooth function $\phi:[0,2\pi]\to\mathbb R^+$ such that $\phi(0)=\phi(2\pi)$. For instance,
\begin{enumerate}
    \item $\phi(\theta)=l(|\sin\theta|+|\cos\theta|)$ corresponds to a four-point star-shaped rigid body of length $2l>0$ rotating about its center \cite[Eq. (81)]{FeMaOrWe2003}, and
    \item $\phi(\theta)=\sqrt{a^2\,\sin^2\theta+b^2\,\cos^2\theta}$ corresponds to an ellipse with principal semi-axes of lengths $a,b>0$ rotating about its center \cite[Proposition 2.2]{TrLe2024}.
\end{enumerate}
The admissible set is given by
\begin{equation*}
S=\{q=(\theta,x,y)\in{\rm SE}(2)\mid y\geq\phi(\theta)\}. 
\end{equation*}
It can be checked that the following are bases for the tangent and cotangent bundles of the admissible set, respectively:
\begin{align*}
T\partial S & ={\rm span}\{e_1=\partial_x,~e_2=\partial_\theta+\dot\phi(\theta)\,\partial_y\},\\
T^*\partial S & ={\rm span}\{e^1=dx,~e^2=\psi(\theta)\,(d\theta+\dot\phi(\theta)\,dy)\},
\end{align*}
where $\psi(\theta)=1/(1+\dot\phi(\theta)^2)$. It is readily seen that
\begin{align*}
T\imath_S:T\partial S\to T{\rm SE}(2)|_{\partial S},\quad & (q,v)\mapsto T\imath_S(q,v)=v_2\,\partial_\theta+v_1\,\partial_x+\dot\phi(\theta)\,v_2\,\partial_y,
\\
\imath_S^*:T^*{\rm SE}(2)|_{\partial S}\to T^*\partial S,\quad & (q,p)\mapsto \imath_S^*(q,p) 
=p_x\,e^1+(p_\theta+\dot\phi(\theta)\,p_y)\,e^2,
\end{align*}
for each $v=v_1\,e_1+v_2\,e_2\in T_q\partial S$ and $p=p_\theta\,d\theta+p_x\,dx+p_y\,dy\in T_q^*{\rm SE}(2)$. Similarly, the projection and its adjoint are chosen as
\begin{align*}
\pi_S:T{\rm SE}(2)|_{\partial S}\to T\partial S,\quad & (q,v)\mapsto\pi_S(q,v)
=v_x\,e_1+v_\theta\,e_2,\\
\pi_S^*:T^*\partial S\to T^*{\rm SE}(2)|_{\partial S},\quad & (q,p)\mapsto\pi_S^*(q,p)=p_2\,d\theta+p_1\,dx,
\end{align*}
for each $v=v_\theta\,\partial_\theta+v_x\,\partial_x+v_y\,\partial_y\in T_q{\rm SE}(2)$ and $p=p_1\,e^1+p_2\,e^2\in T_q^*\partial S$.

The Lagrangian of the system, $L:T{\rm SE}(2)\to\mathbb R$, consists of the kinetic energy (translational and rotational) together with the gravitational potential, i.e.,
\begin{equation*}
L(q,v)=\frac{1}{2}m(v_x^2+v_y^2)+\frac{1}{2}I_\phi\,v_\theta^2-mgy,\qquad(q,v)\in T{\rm SE}(2),
\end{equation*}
where $m>0$ and $I_\phi>0$ are the mass and the moment of inertia with respect to the $z$ axis about the center of the body, respectively, of the object, and $g>0$ is the gravitational acceleration. The discrete Lagrangian $L_d:{\rm SE}(2)\times{\rm SE}(2)\times(0,1)\to\mathbb R$ is chosen as in the previous example:

\begin{align*}
L_d(q,v,h) & =h\,L\left(\frac{q+v}{2},\frac{v-q}{h}\right)=\frac{m}{2h}\left((v_x-x)^2+(v_y-y)^2\right)+\frac{I_\phi}{2h}(v_\theta-\theta)^2-hmg\frac{y+v_y}{2}.
\end{align*}

By applying Algorithm \ref{alg} to this discrete Lagrangian, the evolution of an ellipse is simulated. The numerical experiments are carried out with the following choice of parameters: $m=1$, $g=9.8$, $a=1$, $b=0.5$. The moment of inertia of an ellipse is $I_\phi=m(a^2+b^2)/4$. Lastly, we picked  $h=10^{-2}$ as time-step, as well as $q(0)=(\theta(0),x(0),y(0))=(\pi/2,0,3.5)$ and $v_q(0)=(v_\theta(0),v_x(0),v_y(0))=(-3,2,0)$ as initial conditions. In Figure \ref{fig:trajectory}, the trajectory of the ellipse is simulated for two seconds, including one impact with the floor. The dashed black line indicates the trajectory of the center and dashed red lines indicate the principal axes of the ellipse. In Figure \ref{fig:energy}, we plot the energy evolution of the system. As expected, there is some slight energy fluctuation on the long time, but the total energy remains nearly constant. An animated simulation of the system can be found \href{https://youtu.be/NfrdH6Asx10}{here}.

\begin{figure}[!b]
    \centering
    \begin{minipage}{.49\textwidth}
    \centering
    \includegraphics[width=0.8\linewidth]{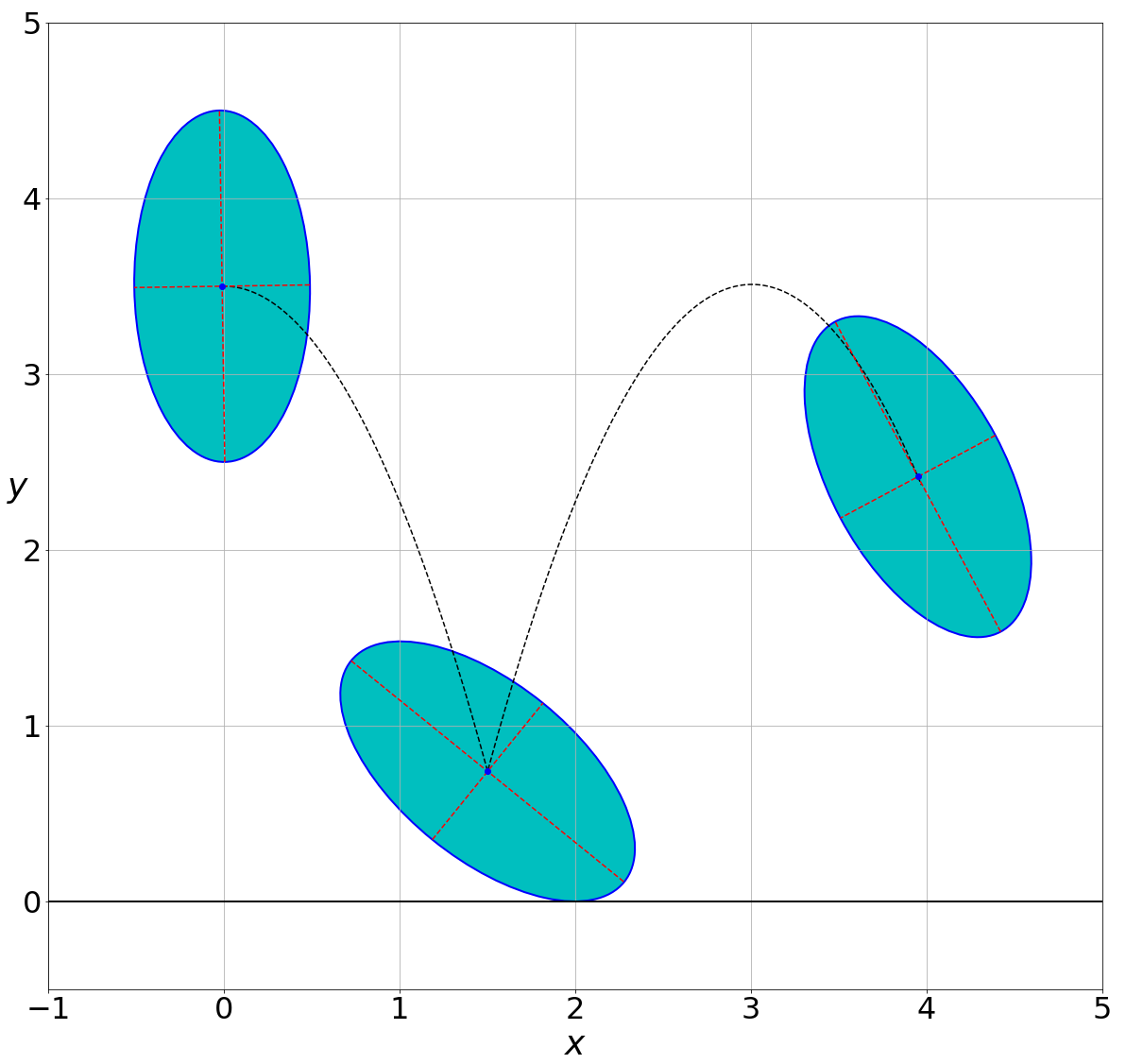}
    \caption{Trajectory of an ellipse rotating and colliding with the floor. The initial and final times are $t_0=0$ and $t_N=2$.}
    \label{fig:trajectory}
    \end{minipage}
    \begin{minipage}{.5\textwidth}
    \centering
    \includegraphics[width=\linewidth]{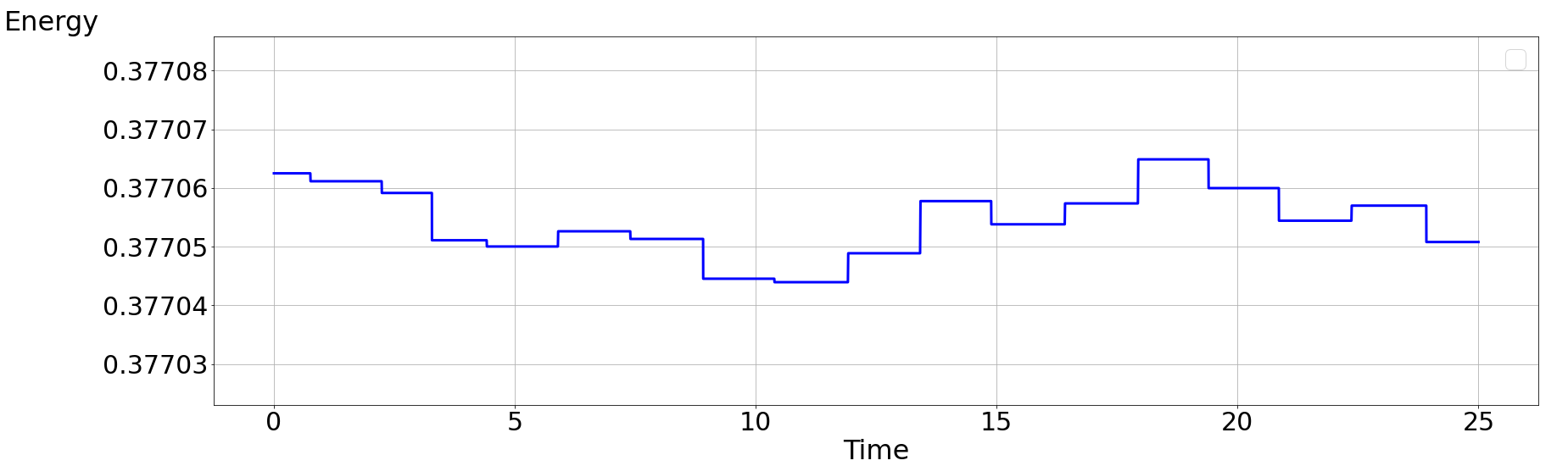}
    \caption{Long-time evolution of the energy of the ellipse colliding with the floor. The initial and final times are $t_0=0$ and $t_N=25$. The system underwent 17 impacts during that period of time.}
    \label{fig:energy}
    \end{minipage}
\end{figure}

\subsection{Nonholonomic spherical pendulum hitting a cylindrical surface}\label{sec:pendulum}

Let us consider a spherical pendulum (cf. \cite[\S 5.1]{EyCoBl2021}) hitting a cylindrical surface. The configuration space of the system is $Q=\mathbb S^2$, where we pick spherical coordinates $q=(\theta,\varphi)\in[0,\pi)\times[0,2\pi)$. The Lagrangian, $L:T\mathbb S^2\to\mathbb R$ consists of the kinetic kinetic energy, together with the gravitational potential,
\begin{equation}\label{eq:lagrangianpendulum}
L(\theta,\varphi;v_\theta,v_\varphi)=\frac{1}{2}m\ell^2(v_\theta^2+v_\varphi^2\sin^2\theta)-mg\ell\cos\theta,
\end{equation}
where $m,\ell,g>0$ are the mass and the length of the pendulum, and the gravitational acceleration, respectively, and we denote $v=v_\theta\,\partial_\theta+v_\varphi\,\partial_\varphi\in T_q \mathbb S^2$. We assume that the polar and azimuthal velocities are proportionally related by a function depending only on the polar angle, i.e., $v_\varphi=f(\theta) v_\theta$ for some $f:\mathbb R\to\mathbb R$ such that $f(\pi)=f(0)$ for each $\theta\in\mathbb R$. Note that, for $f\equiv 0$, we recover a standard 1-dimensional pendulum. This results in the following non-holonomic constraint:
\begin{align*}
\Delta_Q & =\operatorname{span}\{\partial_\theta+f(\theta)\partial_\varphi\},\\\Delta_Q^\circ & =\operatorname{span}\{\omega^1=f(\theta)d\theta-d\varphi\}.
\end{align*}
Due to the pendulum moving inside a cylinder, the admissible set reads
\begin{equation*}
S=\{(\theta,\varphi)\in\mathbb S^2\mid\ell\sin\theta\leq R\},
\end{equation*}
where $R>0$ is the radius of the cylinder. The tangent and cotangent bundles of $\partial S$ are readily seen to be $T\partial S=\operatorname{span}\{\partial_\varphi\}$ and $T^*\partial S={\rm span}\{d\varphi\}$, respectively, and the annihilator of the tangent bundle is given by $(T\partial S)^\circ=\operatorname{span}\{d\theta\}$. Similarly, the tangent map of the boundary inclusion and its adjoint are given by
\begin{align*}
T\imath_S:T\partial S\to T\mathbb S^2|_{\partial S},\quad & (q,v)\mapsto T_q\imath_S(q,v)=v_\varphi\,\partial_\varphi\\
\imath_S^*:T^*\mathbb S^2|_{\partial S}\to T^*\partial S,\quad & (q,p)\mapsto\imath_S^*(q,p)=p_\varphi\,d\varphi,
\end{align*}
for each $v=v_\varphi\,\partial_\varphi\in T_q\partial S$ and $p=p_\theta\,d\theta+p_\varphi\,d\varphi\in T_q^*\mathbb S^2$. Hence, the projection and its adjoint can be chosen as
\begin{align*}
\pi_S:T\mathbb S^2|_{\partial S}\to T\partial S,\quad & (q,v)\mapsto\pi_S(q,v)=v_\varphi\,\partial_\varphi,\\
\pi_S^*:T^*\partial S\to T^*\mathbb S^2|_{\partial S},\quad & (q,p)\mapsto\pi_S^*(q,p)=p_\varphi\,d\varphi,
\end{align*}
for each $v=v_\theta\,\partial_\theta+v_\varphi\,\partial_\varphi\in T_q\mathbb S^2$ and $p=p_\varphi\,d\varphi\in T_q^*\partial S$.

Given a time-step $h\in(0,1)$, the retraction $\mathcal R:T\mathbb S^2\to\mathbb S^2$ may be chosen as
\begin{align*}
\mathcal R_q(v)=(\theta+h\,v_\theta,\varphi+h\,v_\varphi),\qquad(q,v)\in T\mathbb S^2
\end{align*}
For fixed $q=(\theta,\varphi)\in \mathbb S^2$, the inverse of $\mathcal R_q:T_q\mathbb S^2\to\mathbb S^2$ is given by
\begin{align*}
& \mathcal R_q^{-1}(v)=\left(\frac{v_\theta-\theta}{h}\,\partial_\theta+\frac{v_\varphi-\varphi}{h}\,\partial_\varphi\right),\qquad v\in\mathbb S^2.
\end{align*}
As a result, for each $(q,v,h)\in\mathbb S^2\times\mathbb S^2\times(0,1)$, the discrete Lagrangian $L_d:\mathbb S^2\times\mathbb S^2\times(0,1)\to\mathbb R$ and the discrete constraint map $\omega_{d+}^1:\mathbb S^2\times \mathbb S^2\to\mathbb R$ read
\begin{align*}
L_d(q,v,h) & =h\,L\left(q,\mathcal R_q^{-1}(v)\right)=\frac{m\ell^2}{2h}\big((v_\theta-\theta)^2+(v_\varphi-\varphi)^2\sin^2\theta\big)-hmg\ell\cos\theta,\\
\omega_{d+}^1(q,v) & =\omega_q^1(\mathcal R_q^{-1}(v))=\frac{1}{h}\left(f(\theta)(v_\theta-\theta)-v_\varphi+\varphi\right).
\end{align*}

The evolution of the pendulum is simulated by applying Algorithm \ref{alg}. The parameters of the model are chosen as follows: $m=1$, $g=9.8$, $\ell=2$, $R=1.5$ and $f(\theta)=\pi+\cos^2\theta$. Similarly, the initial conditions are chosen as $q(0)=(\theta(0),\varphi(0))=(0.75\pi,0)$ and $v(0)=(v_\theta(0),v_\varphi(0))=(0.25\pi,0.25(\pi+0.5)\pi)$. In Figure \ref{fig:angles}, we depict the evolution of the angles during five seconds. The system underwent three collisions during that period. This \href{https://youtube.com/shorts/3jK2AuNZgLc}{video} shows the motion of the pendulum during that period. Note that the  `spiral' trajectory is due to the nonholonomic constraint. In order to investigate the long-time energy behavior of the integrator, we run the simulation for 100 seconds, yielding the results shown in Figure \ref{fig:energypendulum}. Note that that it remains almost constant with a slight increase within time.

\begin{figure}[!b]
    \centering
    \begin{minipage}{.49\textwidth}
    \centering
    \includegraphics[width=0.8\linewidth]{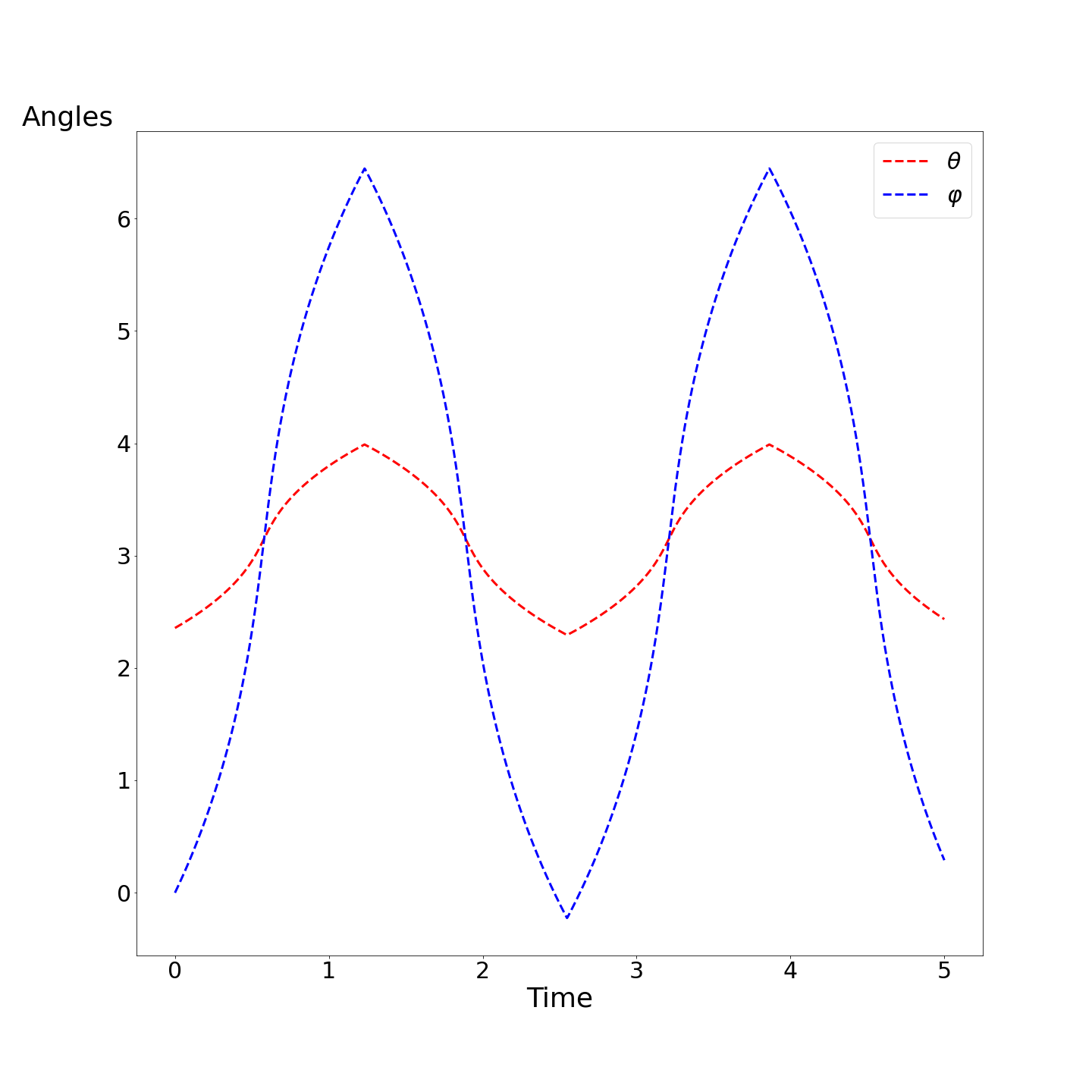}
    \caption{Evolution of the angles of a nonholonomic spherical pendulum colliding with a cylindrical surface. The initial and final times are $t_0=0$ and $t_N=5$, and the time-step is $h=10^{-3}$.}
    \label{fig:angles}
    \end{minipage}
    \begin{minipage}{.5\textwidth}
    \centering
    \includegraphics[width=\linewidth]{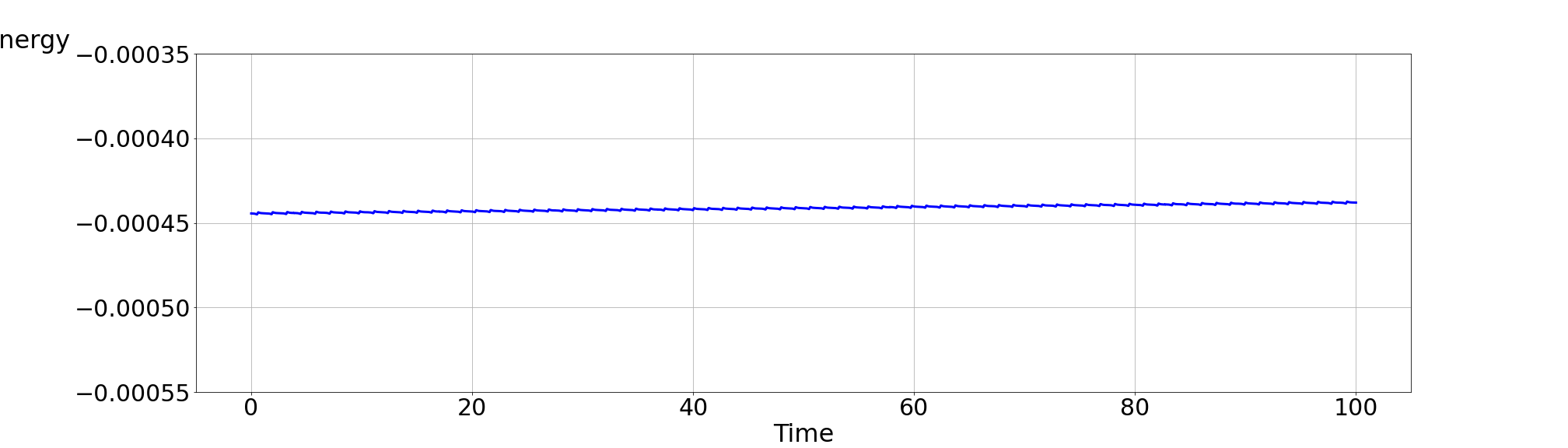}
    \caption{Long-time evolution of the energy of a nonholonomic spherical pendulum colliding with a cylindrical surface. The initial and final times are $t_0=0$ and $t_N=100$, and the time-step is $h=10^{-4}$. The system underwent 76 impacts during that period of time.}
    \label{fig:energypendulum}
    \end{minipage}
\end{figure}

\section{Conclusions and future work}

In this paper, a discrete analogous to the nonsmooth variational approach introduced in \cite{RoCo2023,RoCo2024} to treat implicit nonholonomic Lagrangian systems with collisions has been developed, thus extending the discrete theory for unconstrained systems presented in \cite{FeMaOrWe2003}. After giving the discrete Lagrange--d'Alembert--Pontryagin variational principle, the corresponding equations are computed. They consist of the discrete nonholonomic implicit Euler--Lagrange equations obtained in \cite{LeOh2010,LeOh2011}, together with the discrete conditions for the elastic impact. The forward in time implementation of these equations is summarized in Algorithm \ref{alg}. Lastly, some examples illustrating the theory are presented, including a bouncing ellipse and a nonholonomic spherical pendulum, and several simulations are performed to test the validity of the variational integrators.

For future work, we would like to develop a discrete reduction theory by mimicking \cite{RoCo2023}, thus obtaining reduced variational integrators. The removal of superfluous degrees of freedom will lead to significantly faster simulations. Similarly, we would like to explore interconnection, investigating how the impacts on a subsystem transfer to the rest of them due to the coupling. Furthermore, this interconnection approach will allow for considering systems with open ports that can be utilized to control the system \cite{YoMa2012}. In turn, the controller of the system may be computed from the optimal control perspective \cite{CoMa2013}, i.e., by minimizing some cost functional. In any case, a critical step will be to ensure that the input preserves the constraints on the state, as well as on its velocity and momentum, at the impact time. Lastly, we would like to compare the approach followed here to carry out discretization with the alternative proposals presented in \cite{CaFeToZu2023,PeYo2024}, testing the performance of the integrators obtained from the different perspectives.

\bibliographystyle{siam}
\bibliography{biblio}

\end{document}